\newcommand{\N}{{\mathbb N}}
\newcommand{\upto}{,\ldots ,}
\newtheorem{thm}{Theorem}
\newtheorem{lem}{Lemma}
\theoremstyle{definition}
\newtheorem*{remark}{Remark}
\numberwithin{equation}{section}
\begin{document}

\title{Separating Invariants for Two Copies of the Natural $S_n$-action}

\author{Fabian Reimers}

\address{Technische Universi\"at M\"unchen, Zentrum Mathematik - M11, 
Boltzmannstr.~3, 85748 Garching, Germany}

\email{reimers@ma.tum.de}

\date{May 10, 2019}

\subjclass[2000]{13A50}

\keywords{Invariant theory, separating invariants, symmetric group, multisymmetric polynomials}

\begin{abstract}
This note provides a set of separating invariants for the ring of vector invariants $K[V^2]^{S_n}$ of two copies of the natural $S_n$-representation $V = K^n$ over a field of characteristic 0. This set is much smaller than generating sets of $K[V^2]^{S_n}$.

For $n \leq 4$ we show that this set is minimal with respect to inclusion among all separating sets.
\end{abstract}

\maketitle

\section*{Introduction}

Let $K$ be a field of characteristic 0. Let $G = S_n$ be the symmetric group and $V = K^n$ its natural representation over $K$. The $K$-algebra of symmetric polynomials $K[V]^{S_n} = K[x_1 \upto x_n]^{S_n}$ is generated by the first $n$ power sums
\[ x_1 + \ldots + x_n, \quad x_1^2 + \ldots + x_n^2, \quad \ldots \quad, \quad x_1^n + \ldots + x_n^n.\]
The action of $S_n$ on $V$ extends to a diagonal action of $S_n$ on $V^m$ for $m \geq 2$. The ring of vector invariants $K[V^m]^{S_n}$ is not a polynomial ring anymore. Its elements are called \emph{multisymmetric polynomials} and they have been classically studied (see Weyl \cite{weyl1939classical}). If we write the variables of $K[V^m]$ as $x_{ij}$ with $i = 1 \upto n$, $j = 1 \upto m$, then $K[V^m]^{S_n}$ is minimally generated by the following polarizations of the above power sums: \vspace{-1mm}
\[ f_{k_1 \upto k_m} := \sum_{i=1}^n x_{i1}^{k_1} \cdot \ldots \cdot x_{im}^{k_m} \quad \text{ with $k_i \in \N$ such that $k_1 + \ldots + k_m \leq n$},\]
see Domokos \cite[Theorem 2.5 \& Remark 2.6]{domokos2009vector}.
Asymptotically, this minimal generating set has size $\mathcal{O}(n^m)$. But by the general upper bound for separating invariants (see Derksen and Kemper \cite[Chapter 2.4]{derksen2002computationalv2}), a subset of $K[V^m]^{S_n}$ of size $2 nm + 1 = \mathcal{O}(nm)$ which separates all $S_n$-orbits in $V^m$ exists. A lower bound of $(n+1)m-1$ for the size of a separating set was achieved by Dufresne and Jeffries \cite[Theorem 3.4]{dufresne2015separating}.

However, to the best of my knowledge no explicit small separating sets for multisymmetric polynomials are known. Using the concept of ``cheap polarizations'' Draisma et al. \cite[Corollary 2.12]{draisma2008polarization} give a separating set of size $\mathcal{O}(n^2 m)$. 

This note considers the case $m = 2$. Here the two sets of variables are denoted by $x_1 \upto x_n$ and $y_1 \upto y_n$. Then $K[V^2]^{S_n}$ is minimally generated by the set of power sums 
\begin{equation}\label{EquationFjk} M = \{ \underbrace{\sum_{i=1}^n x_i^j y_i^k}_{=: \, f_{j,k}} \mid j,k \in \N \text{ such that } j+k \leq n \text{ and } jk \neq 0 \}. \end{equation}
We use the notation $f_{j,k} = \sum_{i=1}^n x_i^j y_i^k$ and $M$ as in (\ref{EquationFjk}) throughout this paper. Observe that
\[ |M| = \left(\sum_{j=0}^n (n-j+1)\right) - 1= \frac{1}{2} (n^2 +3n).\]
The main result of this paper is the separating set $S$ given in the following theorem, which is much smaller than $M$.
\bigskip 

\begin{thm}\label{TheoremMain}
Let $I$ be the set of double indices 
\[ I = \{ (j,k) \in \N^2 \mid 0 \leq j \leq n,\,\, 0 \leq k \leq \lfloor \frac{n}{j+1} \rfloor,\,\, jk \neq 0 \} .\]
Then $S := \{ f_{j,k} \mid (j,k) \in I \}$ is a separating subset of the ring of bisymmetric poly-\\[2mm] nomials $K[V^2]^{S_n}$. 
\end{thm}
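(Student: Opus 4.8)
The plan is to reduce the statement to a combinatorial fact about multisets of pairs and then run a Newton--Vandermonde induction. Since $S_n$ is finite, $S$ is separating iff for all points $v=(a,b)$, $v'=(a',b')$ of $V^2$ (with coordinates taken in an algebraic closure of $K$, which is harmless as it still has characteristic $0$) the equalities $f_{j,k}(v)=f_{j,k}(v')$ for all $(j,k)\in I$ force the multisets $\{(a_i,b_i)\}_{i=1}^n$ and $\{(a'_i,b'_i)\}_{i=1}^n$ to coincide, this last being exactly the condition that $v,v'$ lie in one $S_n$-orbit. So I would fix two such points and aim to prove precisely this.

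First, note that $S$ contains the power sums $f_{1,0}\upto f_{n,0}$ in the $x_i$ and $f_{0,1}\upto f_{0,n}$ in the $y_i$. By Newton's identities (available in characteristic $0$) the $n$ equalities $\sum_i a_i^j=\sum_i(a'_i)^j$ yield $\prod_i(T-a_i)=\prod_i(T-a'_i)$, so after relabelling the tuples $a$ and $a'$ agree. Write $c_1\upto c_r$ for the distinct values among the $a_i$, with multiplicities $m_1\upto m_r$ (so $m_1+\cdots+m_r=n$), and group indices accordingly. Then $v,v'$ lie in one orbit iff for every $s$ the size-$m_s$ multisets $B_s:=\{b_i : a_i=c_s\}$ and $B'_s:=\{b'_i : a'_i=c_s\}$ coincide. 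Setting $p_k(B):=\sum_{b\in B}b^k$ and $D_{s,k}:=p_k(B_s)-p_k(B'_s)$, each hypothesis $f_{j,k}(v)=f_{j,k}(v')$ reads $\sum_{s=1}^r c_s^{\,j}D_{s,k}=0$. Unwinding the inequality $k\le\lfloor n/(j+1)\rfloor$ defining $I$, for a fixed $k\ge 1$ this relation is available for exactly the exponents $j=0,1\upto\lfloor n/k\rfloor-1$, the value $j=0$ coming from $f_{0,k}\in S$.

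The heart of the argument is an induction on $k\ge 1$ proving the statement $P(k)$: ``$D_{s,k}=0$ for every $s$ with $m_s\ge k$''. Granting $P(k)$ for all $k$, Newton's identities applied block by block give $B_s=B'_s$ for every $s$, which finishes the proof. For the step, assume $P(1)\upto P(k-1)$. If $m_s\le k-1$ then $D_{s,k'}=0$ for all $k'\le m_s$ by the inductive hypothesis, so $B_s=B'_s$ already and in particular $D_{s,k}=0$; hence the relation for our $k$ collapses to $\sum_{s:\,m_s\ge k}c_s^{\,j}D_{s,k}=0$ for $j=0\upto\lfloor n/k\rfloor-1$. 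Now set $r_k:=\#\{s : m_s\ge k\}$; from $k\,r_k\le\sum_{s:\,m_s\ge k}m_s\le n$ we get $r_k\le\lfloor n/k\rfloor$, so the exponents $j=0\upto r_k-1$ are among those available and the corresponding $r_k\times r_k$ matrix $(c_s^{\,j})$ is Vandermonde in the distinct nodes $\{c_s : m_s\ge k\}$, hence invertible. Therefore $D_{s,k}=0$ for all such $s$, which is $P(k)$.

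The one genuinely delicate point, and the step I expect to be the main obstacle, is discovering this induction: realizing that the right quantity to control is ``$D_{s,k}$ vanishes as soon as the block multiplicity $m_s$ reaches $k$'', and that the cut-off $\lfloor n/(j+1)\rfloor$ in the definition of $I$ is engineered exactly so that at each level $k$ the number $\lfloor n/k\rfloor$ of exponents $j$ supplied by $S$ dominates the number $r_k$ of blocks of multiplicity $\ge k$ (an inequality forced by $m_1+\cdots+m_r=n$). The remaining ingredients — the reduction to multisets of pairs, Newton's identities, and invertibility of Vandermonde matrices — are routine.
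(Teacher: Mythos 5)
Your proposal is correct. The translation of the condition $k\leq\lfloor n/(j+1)\rfloor$ into ``for fixed $k\geq 1$ the exponents $j=0\upto\lfloor n/k\rfloor-1$ are available'' is right, the counting inequality $k\,r_k\leq\sum_{s:\,m_s\geq k}m_s\leq n$ does give $r_k\leq\lfloor n/k\rfloor$, and Vandermonde invertibility in the distinct $c_s$ plus Newton's identities (characteristic $0$) close the argument; the collapse step for blocks with $m_s\leq k-1$ is justified by the earlier levels. Your route is, however, organized differently from the paper's. The paper argues by induction on $n$: after matching the $x$-coordinates and grouping into blocks of multiplicities $n_1\geq\ldots\geq n_r$, it uses the indices $(j,k)$ with $j\leq r-1$, $k\leq n_r$ (legitimate because $rn_r\leq n$) and one Vandermonde system per $k$ to match the $y$-multiset of the \emph{smallest} block only, then fixes those coordinates, projects to $K^{n-n_r}\times K^{n-n_r}$, checks $I'\subseteq I$, and invokes the induction hypothesis for $n-n_r$. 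You instead run a single induction on the exponent $k$, proving the refined statement $D_{s,k}=0$ for every block with $m_s\geq k$, with fully matched small blocks dropping out of the linear system. Both proofs rest on the same two facts --- the Vandermonde matrix in the distinct $x$-values and the inequality $k\cdot\#\{s:\,m_s\geq k\}\leq n$, of which the paper's $rn_r\leq n$ is the first instance --- but your version avoids the recursion and projection bookkeeping and makes transparent that the cutoff $\lfloor n/(j+1)\rfloor$ is exactly what is needed at every level $k$, not just for the smallest block; the paper's version buys a shorter write-up by reusing the theorem itself as induction hypothesis. (You also read the paper's condition ``$jk\neq 0$'' as ``$(j,k)\neq(0,0)$'', which is what the counting formulas and the presence of $f_{j,0}$, $f_{0,k}$ in $S$ show is intended.)
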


\bigskip 

\begin{remark}(a) Note that $$|S| = \left( \sum_{j=0}^{n} (\lfloor \frac{n}{j+1}\rfloor + 1) \right) - 1 = n + \sum_{j=1}^n \lfloor \frac{n}{j}\rfloor.$$
The sum $D(n) := \sum_{j=1}^n \lfloor \frac{n}{j}\rfloor$ is the so-called Divisor summatory function. Asymptotically, we have \vspace{-1mm}$$|S| = \mathcal{O}(n \log(n))\vspace{1mm}$$ in contrast to $|M| = \mathcal{O}(n^2)$.
For example, for $n = 100$ we have $|M| = 5150$ and $|S| = 582$.

(b) The invariants from $S$ are homogeneous, while the general upper bound of $2nm + 1$ for separating invariants can typically only be achieved with inhomogeneous invariants.

(c) The set $I$ is not symmetric in $(j,k)$. For example, for $n = 3$ we have $f_{2,1} \in S$, but $f_{1,2} \notin S$.
\end{remark}

\medskip

We give the proof of this theorem in Section 1. In Section 2 we analyze the cases $n = 2$, $3$, $4$ more closely and show that $S$ is indeed a \emph{minimal} separating set.

\smallskip

\section{Proof of the main theorem}\label{section}

We use induction on $n$ to proof Theorem \ref{TheoremMain}. For $n = 1$ we have $S = M = \{ x_1,y_1\}$ which is clearly separating (and generating) for the trivial action on $K[V^2] = K[x_1,y_1]$.

Now let $n \geq 2$ and take two points $p$, $q \in V^2$ such that $f(p) = f(q)$ for all $f \in S$. We need to show that $p$ and $q$ lie in the same $S_n$-orbit. 

For this let $p = (a_1 \upto a_n, b_1 \upto b_n)$. The \glqq{}original\grqq{} power sums $f_{j,0} = \sum_{i=1}^n x_i^j$ with $j = 1 \upto n$ are elements of $S$. From $f_{j,0}(p) = f_{j,0}(q)$ for all $j$ we get that the $x_i$-coordinates of $p$ and $q$ are the same upto a permutation. So there exists an element $\sigma \in S_n$ with $\sigma q = (a_1 \upto a_n, c_1 \upto  c_n)$. 
Since we only want to show that $p$ and $q$ are equivalent under the $S_n$-action, we may replace $q$ by $\sigma q$. So now we may assume that 
\[ p = (a_1 \upto a_n, b_1 \upto b_n) \quad \text{and} \quad q = (a_1 \upto a_n, c_1 \upto c_n).\]

To exhibit the main idea of the following proof let us first do the much easier special case that all $a_i$ are distinct. The elements $f_{j,1}$ with $j = 0 \upto n-1$ are in $S$, hence for all these $j$ we have
\[ \sum_{i=1}^n a_i^j b_i = f_{j,1}(p) = f_{j,1}(q) =  \sum_{i=1}^n a_i^j c_i.\]
Using a Vandermonde matrix these equations can be written as
\[ \begin{pmatrix}1 &\ldots & 1\\ a_1 & \ldots & a_n \\ \vdots \\ a_1^{n-1} & \ldots & a_n^{n-1}\end{pmatrix} \cdot \begin{pmatrix}b_1\\\vdots\\b_n\end{pmatrix} = \begin{pmatrix}1 &\ldots & 1\\ a_1 & \ldots & a_n \\ \vdots \\ a_1^{n-1} & \ldots & a_n^{n-1}\end{pmatrix} \cdot \begin{pmatrix}c_1\\\vdots\\c_n\end{pmatrix}.\] Now if we assume that the $a_i$ are pairwise distinct, we can conclude $b_i = c_i$ for all $i$ and hence $p = q$. So indeed they lie in the same $S_n$-orbit.

For the general case let $r := |\{ a_1 \upto a_n \}|$ be the number of distinct $a_i$'s and let $$\{ \lambda_1 \upto \lambda_r \} = \{a_1 \upto a_n\}.$$ We can use a permutation $\tau \in S_n$ to rearrange the $a_i$'s such that 
\[ \tau (a_1 \upto a_n) = (\underbrace{\lambda_1 \upto \lambda_1}_{n_1}, \underbrace{\lambda_2 \upto \lambda_2}_{n_2} \upto \underbrace{\lambda_r \upto \lambda_r}_{n_r}),\]
where $(n_1 \upto n_r)$ is a partition of $n$, i.e., $n = n_1 + \ldots + n_r$ and $n_1 \geq n_2 \geq \ldots \geq n_r$.

By replacing $p$ and $q$ with $\tau p$ and $\tau q$, respectively, we may now assume that 
\begin{align}\label{FormOfpq} p &= (a_1 \upto a_n,b_1 \upto b_n) = (\underbrace{\lambda_1 \upto \lambda_1}_{n_1}, \underbrace{\lambda_2 \upto \lambda_2}_{n_2} \upto \underbrace{\lambda_r \upto \lambda_r}_{n_r}, b_1 \upto b_n),\notag \\
q &= (a_1 \upto a_n,c_1 \upto c_n) = (\underbrace{\lambda_1 \upto \lambda_1}_{n_1}, \underbrace{\lambda_2 \upto \lambda_2}_{n_2} \upto \underbrace{\lambda_r \upto \lambda_r}_{n_r}, c_1 \upto c_n),
\end{align}
where the $\lambda_i$ are pairwise distinct and $n_1 \geq n_2 \geq \ldots \geq n_r$ holds. 

Observe that $r n_r \leq n$, i.e., $n_r \leq \frac{n}{r}$. For every $j = 0 \upto r-1$ and $k = 1 \upto n_r$ we have $(j,k) \in I$, since:
\begin{align*} j \leq r-1 \leq n-1, \\ k \leq n_r \leq \frac{n}{r} \leq \frac{n}{j+1}. \end{align*} 

So $f_{j,k} \in S$ for $j = 0 \upto r-1$ and $k = 1 \upto n_r$. Hence for these $j$ and $k$ we have $f_{j,k}(p) = f_{j,k}(q)$ by assumption.
It is 
\begin{align*} f_{j,k}(p) = \sum_{i=1}^n a_i^j b_i^k &= \lambda_1^j (b_1^k + \ldots + b_{n_1}^k) + \ldots + \lambda_r^j (b_{n - n_r + 1}^k + \ldots + b_{n}^k )\\
&= \sum_{i=1}^r \lambda_i^j \left(\sum_{l = 1}^{n_i} b_{n_1 + \ldots + n_{i-1} + l}^k \right),
\end{align*}
and similarly 
\[ f_{j,k}(q) = \sum_{i=1}^r \lambda_i^j \left(\sum_{l = 1}^{n_i} c_{n_1 + \ldots + n_{i-1} + l}^k \right).\]
For a fixed $k \in \{ 1 \upto n_r \}$ this holds for every $j = 0 \upto r-1$. So we conclude that the vectors
\begin{equation}\label{VectorsOfbAndc} \begin{pmatrix}b_1^k + \ldots + b_{n_1}^k \\ b_{n_1+1}^k + \ldots + b_{n_1+n_2}^k \\ \vdots \\ b_{n - n_r + 1}^k + \ldots + b_{n}^k\end{pmatrix} \quad \text{and} \quad \begin{pmatrix}c_1^k + \ldots + c_{n_1}^k \\ c_{n_1+1}^k + \ldots + c_{n_1+n_2}^k \\ \vdots \\ c_{n - n_r + 1}^k + \ldots + c_{n}^k\end{pmatrix} \in K^r\end{equation}
have the same image when multiplied from the right to the Vandermonde matrix
\[ \begin{pmatrix}1 &\ldots & 1\\ \lambda_1 & \ldots &\lambda_r \\ \vdots & & \vdots \\ \lambda_1^{r-1} & \ldots & \lambda_r^{r-1}\end{pmatrix}.\]
Since the $\lambda_i$ are pairwise distinct, the vectors in (\ref{VectorsOfbAndc}) must be equal. In the last component this means
\[   b_{n - n_r + 1}^k + \ldots + b_{n}^k = c_{n - n_r + 1}^k + \ldots + c_{n}^k.\]
But this holds or every $k = 1 \upto n_r$, so we can conclude that the $n_r$ elements \linebreak$b_{n - n_r + 1},\, b_{n - n_r + 2} \upto b_{n}$ are equal to the $n_r$ elements $c_{n - n_r + 1},\, c_{n - n_r + 2} \upto c_{n}$ upto a permutation $\pi \in S_{n_r}$. 

We read $\pi$ as an element of $S_n$ which permutes only the \emph{last} $n_r$ positions. So applying $\pi$ to a point in $V^2 = K^n\times K^n$ can only change the last $n_r$ of the $x_i$- and the last $n_r$ of the $y_i$-coordinates of this point. But observe that because of (\ref{FormOfpq}) applying $\pi$ to $q$ has no affect on the last $n_r$ of the $x_i$-coordinates of $q$. So after replacing $q$ with $\pi q$ we may now assume that the last $n_r$ of the $y_i$-coordinates of $p$ and $q$ are equal as well, while $p$ and $q$ are still in the form of (\ref{FormOfpq}).

Now we project onto the first $n - n_r$ of both the $x_i$- and $y_i$-coordinates. This projection $K^n \times K^n \to K^{n-n_r} \times K^{n-n_r}$ maps $p$ and $q$ to
\[ p' = (a_1 \upto a_{n-n_r},b_1 \upto b_{n-n_r})\] and \[q' =  (a_1 \upto a_{n-n_r},c_1 \upto c_{n-n_r}),\] respectively. For $n-n_r$ instead of $n$ we have a corresponding set 
\[ I' = \{ (j,k) \in \N^2 \mid 0 \leq j \leq n-n_r,\,\, 0 \leq k \leq \lfloor \frac{n-n_r}{j+1} \rfloor,\,\, jk \neq 0 \}.\]
of double indices, which leads to a corresponding set $S' = \{ f_{j,k}' \mid (j,k) \in I' \}$ of polynomials $$f_{j,k}' := \sum_{i=1}^{n-n_r} x_i^j y_i^k$$ in the ring of bisymmetric polynomials $$K[x_1 \upto x_{n-n_r},y_1 \upto y_{n-n_r}]^{S_{n-n_r}}.$$ By induction assumption, $S'$ is separating. 

For every double index $(j,k) \in I'$ we have $(j,k) \in I$. So for every $(j,k) \in I'$ we get 
\begin{align*} f_{j,k}'(p') = \sum_{i = 1}^{n-n_r} a_i^j b_i^k &= f_{j,k}(p) - \sum_{i=n - n_r+1}^{n} a_i^j b_i^k \\ &= f_{j,k}(q) - \sum_{i=n-n_r+1}^{n} a_i^j c_i^k = f_{j,k}'(q'),\end{align*} 
where we have used that the last $n_r$ coordinates of $p$ and $q$ are equal.
Since $S'$ is separating, there exists a permutation $\varphi \in S_{n - n_r}$ such that $p' = \varphi q'$. We read $\varphi$ naturally as an element of $S_n$ fixing the last $n_r$ positions. Since $p$ and $q$ were already equal at the last $n_r$ of the $x_i$- and the last $n_r$ of the $y_i$-coordinates, we conclude that $p = \varphi q$. \hfill $\qed$

\section{Minimality in low-dimensional cases}\label{section}

In this section we consider the question of minimality (w.\,r.\,t.\, inclusion) of the set $S$ from Theorem \ref{TheoremMain} among all separating subsets of $K[V \times V]^{S_n}$. For $n \leq 4$ we show that $S$ is indeed a minimal separating set.

First we note in the following lemma that the power sums in $x_1 \upto x_n$ (and similarly the power sums in $y_1 \upto y_n$) cannot be left out from $S$. 

\smallskip 

\begin{lem}\label{LemmaUsualPowerSums}
Let $S$ be the set from Theorem \ref{TheoremMain}.
For all $j \in \{ 1 \upto n\}$ the set $S \setminus \{ f_{j,0} \}$ is not separating. Similarly, for all $k \in \{ 1 \upto n\}$ the set $S \setminus \{ f_{0,k} \}$ is not separating.
\end{lem}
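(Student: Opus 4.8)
The plan is to restrict attention to the $S_n$-stable subspace $W = \{(x,0) \in V^2\} \cong V$ and reduce the claim to a statement about the ordinary power sums $p_l = \sum_{i=1}^n x_i^l$. On $W$ every invariant $f_{l,k}$ with $k \geq 1$ vanishes identically, so for two points $p = (a_1 \upto a_n, 0 \upto 0)$ and $q = (b_1 \upto b_n, 0 \upto 0)$ of $W$ one has $f(p) = f(q)$ for all $f \in S \setminus \{f_{j,0}\}$ precisely when $\sum_i a_i^l = \sum_i b_i^l$ for every $l \in \{1 \upto n\} \setminus \{j\}$. Hence it suffices to produce two multisets $\{a_1 \upto a_n\}$ and $\{b_1 \upto b_n\}$ having equal $l$-th power sums for all $l \leq n$ with $l \neq j$ but different $j$-th power sum: then $f_{j,0}$, being $S_n$-invariant, distinguishes $p$ from $q$, so they lie in different $S_n$-orbits, while $S \setminus \{f_{j,0}\}$ does not separate them.

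To build such multisets I would use Newton's identities. In characteristic $0$ the assignment sending the coefficient vector $(e_1 \upto e_n)$ of a monic degree-$n$ polynomial to the vector $(p_1 \upto p_n)$ of power sums of its roots is bijective (each $p_k$ depends only on $e_1 \upto e_k$, and linearly in $e_k$ with coefficient $\pm k \neq 0$, so one solves recursively for the $e_k$). Thus, starting from $A = \{1, 2 \upto n\}$ with power sums $p_l^\circ = \sum_{i=1}^n i^l$, there is a unique monic degree-$n$ polynomial $P \in K[t]$ whose root power sums equal $p_l^\circ$ for $l \neq j$ and $p_j^\circ + 1$ for $l = j$; let $B = \{b_1 \upto b_n\}$ be its multiset of roots in $\overline K$. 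Then $A$ and $B$ are as required, and $p := (1 \upto n, 0 \upto 0)$, $q := (b_1 \upto b_n, 0 \upto 0)$ witness that $S \setminus \{f_{j,0}\}$ is not separating. The statement for $S \setminus \{f_{0,k}\}$ follows by exchanging the two blocks of variables.

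The point that needs care (rather than cleverness) is the convention for ``separating'': the point $q$ will in general have coordinates only in $\overline K$ — producing such a pair of multisets over $\Q$ when $j = n$ amounts to an ideal Prouhet--Tarry--Escott configuration of length $n$, whose existence is open for large $n$ — so the argument must be run with geometric points, which is the setting in which Theorem \ref{TheoremMain} is proved. Should one prefer to bypass the explicit construction, the conclusion also follows by a dimension count: restricted to $W$, the set $S \setminus \{f_{j,0}\}$ generates the polynomial ring $K[p_l \mid l \in \{1 \upto n\} \setminus \{j\}]$, which has transcendence degree $n-1$ over $K$, whereas $K[W]^{S_n}$ has transcendence degree $n$; since a separating subalgebra must have the same transcendence degree as the full invariant ring, $S \setminus \{f_{j,0}\}$ cannot be separating on $W$, hence not on $V^2$.
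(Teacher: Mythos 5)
Your proposal is correct and follows essentially the same route as the paper: restrict to points of the form $(a,0),(a',0)$, on which every $f_{j,k}$ with $k\geq 1$ (and every $f_{0,k}$) vanishes, thereby reducing the claim to the fact that no proper subset of the power sums $f_{1,0}\upto f_{n,0}$ separates $S_n$-orbits in $K^n$. The only difference is that the paper simply cites this last fact (minimal separating sets for $K[x_1 \upto x_n]^{S_n}$ have size $n$), whereas you supply a justification for it via Newton's identities over $\overline{K}$, respectively via the transcendence-degree bound.
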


\begin{proof}
We can view $T = \{ f_{j,0} \mid j = 1 \upto n \}$ as a subset of the ring of symmetric polynomials $K[x_1 \upto x_n]^{S_n}$. Here a minimal separating set has size $n$, so for all $j \in \{ 1 \upto n \}$ the set $T \setminus \{ f_{j,0} \}$ is not separating. Hence there exist $a = (a_1 \upto a_n)$ and $a' = (a_1' \upto a_n') \in K^n$ such that $g(a) = g(a')$ for all $g \in T \setminus \{ f_{j,0} \}$. But then $(a,0)$, $(a',0)$ in $K^{2n}$ cannot be separated by $S \setminus \{ f_{j,0} \}$. 
The second statement is analogous.
\end{proof}

Next we note in the following lemma another polynomial that cannot be left out from $S$.

\smallskip 

\begin{lem}\label{LemmaAnotherPolynomial}
Let $S$ be the set from Theorem \ref{TheoremMain}. Assume that $n$ is even and set $r := \frac n 2$. Then the set $S \setminus \{ f_{1,r}\}$ is not separating.
\end{lem}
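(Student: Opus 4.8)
The plan is to exhibit two explicit points $p,q \in V^2$ that agree on every $f_{j,k}$ with $(j,k) \in I \setminus \{(1,r)\}$ but lie in different $S_n$-orbits. Since $n = 2r$, the index $(1,r)$ really does lie in $I$ (as $r = n/2 = \lfloor n/2 \rfloor$), so removing it leaves a nontrivial set. The key structural fact to exploit is that once we demand the $x$-coordinates of $p$ and $q$ be equal (say, after a permutation), the only invariants $f_{j,k}$ that "see" the $y$-coordinates non-trivially through an $x$-power of degree $j=1$ and a $y$-power of degree exactly $r$ are of the form $f_{1,k}$, and among these only $f_{1,r}$ has $k = r$; every other relevant constraint involves either $j \geq 2$ (forcing $k \leq \lfloor n/3 \rfloor < r$) or $j = 0$ (a pure $y$-power sum) or $k < r$.

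First I would take the $x$-coordinates of both points to be the two-block configuration $(\underbrace{\lambda_1 \upto \lambda_1}_{r}, \underbrace{\lambda_2 \upto \lambda_2}_{r})$ with $\lambda_1 \neq \lambda_2$, mirroring the partition $(r,r)$ of $n$. Then for any $f_{j,k}$ we have, as in the proof of Theorem~\ref{TheoremMain},
\[
f_{j,k}(p) = \lambda_1^j \Big(\sum_{i=1}^r b_i^k\Big) + \lambda_2^j \Big(\sum_{i=r+1}^{2r} b_i^k\Big),
\]
and similarly for $q$ with $c_i$ in place of $b_i$. So agreement of $p$ and $q$ on a collection of $f_{j,k}$ with a fixed $k$ and two distinct values of $j$ already forces the two block-sums $\sum_{i=1}^r b_i^k$ and $\sum_{i=r+1}^{2r} b_i^k$ to match the corresponding $c$-block-sums individually (Vandermonde in $\lambda_1,\lambda_2$). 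The idea is then to choose $b$'s and $c$'s so that for every $k$ occurring in $I\setminus\{(1,r)\}$ with an accompanying $j\in\{0,1\}$ giving two distinct rows, the block power sums of order $k$ agree, while the block power sums of order exactly $r$ differ — which is exactly what $f_{1,r}$ would have detected.

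Concretely I would put $b_i = c_i$ for $i = r+1 \upto 2r$ (the second block), reducing everything to a single-block problem: we need vectors $(b_1\upto b_r)$ and $(c_1\upto c_r)$ in $K^r$ with $\sum b_i^k = \sum c_i^k$ for all $k$ in the relevant range but $\sum b_i^r \neq \sum c_i^r$, and additionally the two vectors must \emph{not} be $S_r$-conjugate (else $p$ and $q$ would be $S_n$-equivalent). The relevant $k$'s: for $j = 0$ we get all $k$ with $1 \le k \le n = 2r$, i.e. $f_{0,k}\in S$ for $k = 1\upto 2r$ — but $f_{0,k}$ only constrains the \emph{total} $y$-power sums $\sum_{i=1}^{2r} b_i^k$, and since the second block is identical in $p$ and $q$, this just says $\sum_{i=1}^r b_i^k = \sum_{i=1}^r c_i^k$ for \emph{all} $k = 1 \upto 2r$. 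That is too strong — it would force $S_r$-conjugacy. So the second block cannot simply be copied; instead I would choose the second-block $y$-values to \emph{compensate}: pick the first blocks $(b_i)$ and $(c_i)$ to have equal power sums for $k \le r-1$ but unequal $r$-th power sum, and then adjust the second blocks $(b_i')$, $(c_i')$ (still each a length-$r$ vector, and now allowed to differ) so that the \emph{totals} $\sum_{i} b_i^k = \sum_i c_i^k$ agree for all $k = 1\upto 2r$, while the \emph{first-block} $r$-th power sums still disagree. This is possible by a dimension count: the first-block discrepancy appears only in degree $r$ among degrees $\le 2r$, and we have $r$ free parameters in each second block to kill $2r$ total-power-sum equations — but the equations for $k \le r-1$ are automatically consistent, so effectively we are solving a triangular-type system; alternatively, exploit that $f_{1,k}\in S$ for $k \le r$ forces the first-block sums $\sum b_i^k$ and $\sum c_i^k$ to agree for $k\le r$ anyway unless $k=r$ — wait, that re-imposes the constraint, so the cleanest route is: take $\sum b_i^k = \sum c_i^k$ for $1\le k\le r-1$ in the first block, let them differ at $k=r$, copy the first-block relation's "defect" into an opposite defect in the second block so totals agree for all $k\le 2r$, and then $f_{1,r}(p)-f_{1,r}(q) = (\lambda_1-\lambda_2)(\sum b_i^r - \sum c_i^r) \ne 0$ is the sole surviving obstruction.

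The main obstacle I anticipate is the simultaneous bookkeeping: one must (i) make $p \not\sim_{S_n} q$ — this needs the two \emph{multisets} $\{b_1\upto b_n\}$-with-$x$-labels and $\{c_1\upto c_n\}$-with-$x$-labels to genuinely differ, not just their symmetric functions; (ii) ensure \emph{every} index in $I\setminus\{(1,r)\}$ really is satisfied, in particular the $j \ge 2$ indices (automatically fine since they force $k \le \lfloor n/3\rfloor \le r-1$ for $n$ even, $n\ge 4$, so they are covered by the "$k \le r-1$" agreement) and the $j=0$ indices with $r \le k \le 2r$ (handled by the total-power-sum compensation); and (iii) keep the construction consistent over a field of characteristic $0$, where one can freely solve such power-sum systems using Newton's identities. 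I would reconcile (i) with the power-sum constraints by working with first blocks that are classic Prouhet–Tarry–Escott-type pairs: two multisets of $r$ reals with equal $k$-th power sums for $k = 1\upto r-1$ but unequal $r$-th power sums exist (e.g. roots of $t^r - a$ versus roots of $t^r - a'$ shifted, or any standard ideal-solution construction), and these are manifestly non-conjugate. Once the first blocks are fixed, the second blocks are obtained by solving a solvable system of $r$ equations in $r$ unknowns, completing the construction; then a direct check against the list of indices in $I\setminus\{(1,r)\}$ finishes the proof.
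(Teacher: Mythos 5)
Your overall framework (two constant blocks of $x$-values, Vandermonde separation of block-wise $y$-power sums, the observation that every index of $I$ other than $(1,r)$ has $j=0$ or $k\le r-1$, and non-conjugacy via block-preserving permutations) matches the structure of the paper's argument, but the proposal stops short of an actual construction, and the step where it stops is the crux. You correctly notice that copying the second $y$-block from $p$ to $q$ is fatal (the invariants $f_{0,k}$, $k=1,\ldots,2r$, would then force the first blocks to have equal power sums up to degree $2r\ge r$, hence to be $S_r$-conjugate), and you propose instead to choose second blocks that ``compensate'' so that all total $y$-power sums up to degree $2r$ agree while the first-block defect at degree $r$ survives. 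But the existence of such compensating blocks is exactly what is not proved: your justification (``possible by a dimension count \ldots a solvable system of $r$ equations in $r$ unknowns'') is not valid as stated, because for an $r$-tuple the power sums of degrees $r+1,\ldots,2r$ are determined by those of degrees $1,\ldots,r$ (Newton's identities in characteristic $0$), so the equations in degrees $r+1,\ldots,2r$ are genuine constraints that a generic choice of second blocks will not satisfy; nothing in the proposal verifies that the forced higher power sums of the compensating blocks match the required discrepancies coming from the first blocks.

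The missing idea is the paper's one-line resolution of precisely this tension: take $b,c\in K^r$ not $S_r$-conjugate with $\sum b_i^k=\sum c_i^k$ for $k\le r-1$, and \emph{swap} the two $y$-blocks between the points, i.e.\ $p=(1,\ldots,1,2,\ldots,2,\,b,\,c)$ and $q=(1,\ldots,1,2,\ldots,2,\,c,\,b)$. Then the total $y$-multiset of $p$ and $q$ is identical, so every $f_{0,k}$ agrees automatically, and one computes $f_{j,k}(p)-f_{j,k}(q)=(2^j-1)\sum_i(b_i^k-c_i^k)$, which vanishes whenever $j=0$ or $k\le r-1$, i.e.\ for every element of $S$ except $f_{1,r}$; this swap is the canonical solution of your compensation system, and without it (or some explicit substitute) the proof is incomplete. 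A secondary point: the witness pair must lie in $K^{2n}$ for the given field $K$ of characteristic $0$, so the existence of $b,c$ should be derived, as in the paper, from the fact that $\{\sum x_i^k \mid k=1,\ldots,r-1\}$ is not a separating set for $K[x_1,\ldots,x_r]^{S_r}$ (fewer than $r$ invariants cannot separate), rather than from roots of $t^r-a$ or real Prouhet--Tarry--Escott data, which need not lie in $K$.
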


\begin{proof}
In $K[x_1 \upto x_r]^{S_r}$ the set 
\[ \{ \sum_{i=1}^{r} x_i^k \mid k = 1 \upto r - 1 \}\] is not separating (since the last power sum is missing). So there exist
$b = (b_1 \upto b_r)$ and $c = (c_1 \upto c_r)$ which are not in the same $S_r$-orbit, but satisfy
\[ \sum_{i=1}^{r} b_i^k = \sum_{i=1}^{r} c_i^k \quad \text{for } k = 1 \upto r - 1.\] 
Consider 
\[ p = (\underbrace{1 \upto 1}_{r},\, \underbrace{2 \upto 2}_{r},\, b_1 \upto b_r,\,c_1 \upto c_r) \in K^{n} \times K^n \]
and 
\[ q = (\underbrace{1 \upto 1}_{r},\, \underbrace{2 \upto 2}_{r},\, c_1 \upto c_r,\,b_1 \upto b_r) \in K^{n} \times K^n. \]
We have 
\[ f_{j,k}(p) = \sum_{i=1}^{r} b_i^k + \sum_{i=1}^{r} 2^j c_i^k, \quad \text{ and } \quad f_{j,k}(q) = \sum_{i=1}^{r} c_i^k + \sum_{i=1}^{r} 2^j b_i^k,\]
hence
\[ f_{j,k}(p) - f_{j,k}(q) = (2^j-1) \sum_{i=1}^{r} (b_i^k - c_i^k).\]
So if $j = 0$ or if $k \leq r - 1$ we have $f_{j,k}(p) - f_{j,k}(q) = 0$. Looking at the definition of $S$ in Theorem \ref{TheoremMain} we see that for all $f_{j,k} \in S$ with $(j,k) \neq (1,r)$ we have $j = 0$ or $k \leq r-1$, hence $f_{j,k}(p) = f_{j,k}(q)$. But $p$ and $q$ are not in the same $S_n$-orbit, so $S \setminus \{ f_{1,r}\}$ is not separating.
\end{proof}

\smallskip 

\begin{thm}\label{TheoremMinimal}
For $n \leq 4$ the set $S$ from Theorem \ref{TheoremMain} is a minimal separating set with respect to inclusion.
\end{thm}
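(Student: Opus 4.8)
The plan is to verify the minimality of $S$ for each of the three cases $n=2,3,4$ separately, by listing the elements of $S$ explicitly and showing that no single one can be removed without destroying the separating property. Since $S$ is separating by Theorem \ref{TheoremMain}, minimality amounts to exhibiting, for every $f_{j,k}\in S$, a pair of points $p,q\in V^2$ lying in different $S_n$-orbits but not separated by $S\setminus\{f_{j,k}\}$. For $n=2$ we have $I=\{(1,0),(2,0),(0,1),(0,2),(1,1)\}$, so $|S|=5$; for $n=3$ we have $I=\{(1,0),(2,0),(3,0),(0,1),(0,2),(0,3),(1,1),(2,1)\}$ (note $\lfloor 3/2\rfloor=1$), so $|S|=8$; for $n=4$ we have $I=\{(j,0):1\le j\le 4\}\cup\{(0,k):1\le k\le 4\}\cup\{(1,1),(1,2),(2,1),(3,1)\}$ since $\lfloor4/2\rfloor=2$ and $\lfloor4/3\rfloor=\lfloor4/4\rfloor=1$, so $|S|=12$.

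First I would dispatch the ``pure'' power sums $f_{j,0}$ and $f_{0,k}$: Lemma \ref{LemmaUsualPowerSums} already shows that none of these $2n$ elements can be removed. Next, the ``mixed'' generators $f_{j,k}$ with $jk\neq 0$ remain. For $n=2$ there is only $f_{1,1}$; for $n=3$ there are $f_{1,1}$ and $f_{2,1}$; for $n=4$ there are $f_{1,1},f_{1,2},f_{2,1},f_{3,1}$. For the cases where the missing element is $f_{1,r}$ with $r=n/2$ — that is, $f_{1,1}$ for $n=2$ and $f_{1,2}$ for $n=4$ — Lemma \ref{LemmaAnotherPolynomial} already covers it. So the genuinely new work is: for $n=3$, handle the removal of $f_{1,1}$ and of $f_{2,1}$; for $n=4$, handle the removal of $f_{1,1}$, of $f_{2,1}$, and of $f_{3,1}$.

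For each of these remaining mixed generators I would construct an explicit witness pair $p,q$, following the template of Lemma \ref{LemmaAnotherPolynomial}: choose the $x$-coordinates to consist of a few repeated constants (to force $f_{j,0}(p)=f_{j,0}(q)$ automatically and to create a Vandermonde obstruction in only a few distinct values), and choose the $y$-coordinates so that a suitable linear combination vanishes for every $(j,k)\in I$ except the one being removed. Concretely, a swap construction $p=(\ldots,u,v)$, $q=(\ldots,v,u)$ on the $y$-side over blocks of equal $x$-value makes $f_{j,k}(p)-f_{j,k}(q)$ a fixed scalar (depending on the $x$-constants and on $j$) times $\sum(u_i^k-v_i^k)$; one then picks $u,v$ in different $S_m$-orbits with matching power sums up to the relevant degree so that only the targeted $(j,k)$ survives, and checks that $p,q$ are genuinely in different $S_n$-orbits. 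The main obstacle is the bookkeeping for $f_{2,1}$ and $f_{3,1}$ when $n=4$: here $k=1$, so the ``matching power sums'' trick on the $y$-side is vacuous, and one must instead exploit the $x$-side — using three or four distinct $x$-values and a Vandermonde/linear-algebra argument to arrange that $\sum a_i^j b_i = \sum a_i^j c_i$ holds for $j=0,1$ (and, for $f_{3,1}$, also $j=2$) with $(b_i)\neq(c_i)$ up to the stabilizer of the $x$-configuration — so the witness must be designed with care to ensure the resulting $p,q$ are not $S_4$-equivalent while still defeating all of $S$ except the single omitted invariant. Once these finitely many explicit pairs are produced and checked, minimality follows in each case.
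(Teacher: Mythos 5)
Your reduction is exactly the paper's: Lemma \ref{LemmaUsualPowerSums} disposes of all $f_{j,0}$ and $f_{0,k}$, Lemma \ref{LemmaAnotherPolynomial} disposes of $f_{1,1}$ for $n=2$ and $f_{1,2}$ for $n=4$, and what remains are the removals of $f_{1,1},f_{2,1}$ for $n=3$ and of $f_{1,1},f_{2,1},f_{3,1}$ for $n=4$. But for these five remaining cases you never actually produce the witness pairs; you only describe a strategy for finding them and assert that ``once these finitely many explicit pairs are produced and checked, minimality follows.'' The existence of such pairs is precisely the content of the theorem beyond the two lemmas, so as written the proof has a genuine gap. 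It is not a routine dimension count either: for $n=4$, removing $f_{1,1}$ requires points agreeing on $f_{2,1}$, $f_{3,1}$ \emph{and} on the nonlinear invariant $f_{1,2}$, disagreeing on $f_{1,1}$, and lying in different $S_4$-orbits, and none of this is verified by the Vandermonde heuristic alone. (The paper settles each case by an explicit pair, e.g.\ $p=(1,2,3,\,1,0,2)$, $q=(1,2,3,\,0,2,1)$ for $f_{2,1}$ when $n=3$.)

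A second, more specific problem: the block-swap template you propose (equal-$x$ blocks with the $y$-vectors $b$ and $c$ interchanged, as in Lemma \ref{LemmaAnotherPolynomial}) cannot isolate \emph{any} of the five remaining invariants, not just the $k=1$ ones you flag. With two blocks of $x$-values $\lambda_1,\lambda_2$ one gets $f_{j,k}(p)-f_{j,k}(q)=(\lambda_2^j-\lambda_1^j)\sum_i(b_i^k-c_i^k)$, so for fixed $k$ the difference vanishes for one value of $j\geq 1$ only if it vanishes for all of them; hence $f_{1,1}$ cannot be broken while $f_{2,1}$ (and $f_{3,1}$) are preserved. All five verifications therefore need witnesses with several distinct $x$-values, chosen so that the appropriate moment conditions $\sum a_i^j b_i=\sum a_i^j c_i$ hold for exactly the required $j$ while the orbits stay distinct --- and these are exactly the concrete constructions your proposal leaves open.
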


\begin{proof}
Case $n = 2$: Here we have
 \[ S = M = \{ \underbrace{x_1 + x_2}_{=\,f_{1,0}},\,\,\, \underbrace{x_1^2 + x_2^2}_{=\,f_{2,0}} ,\,\,\, \underbrace{y_1 + y_2}_{=\,f_{0,1}},\,\,\, \underbrace{y_1^2 + y_2^2}_{=\,f_{0,2}},\,\,\, \underbrace{x_1y_1 + x_2y_2}_{=\,f_{1,1}} \}.\] By Lemma \ref{LemmaUsualPowerSums}, $S \setminus \{ f_{j,k} \}$ with $j = 0$ or $k = 0$ is not separating. By Lemma \ref{LemmaAnotherPolynomial}, $S \setminus \{ f_{1,1} \}$ is not separating.

Case $n = 3$: Here 
\[ S = \{ f_{j,0} \mid j = 1,2,3 \} \cup \{ f_{0,k} \mid k = 1,2,3 \} \cup \{ f_{1,1},\,f_{2,1} \}. \]
By Lemma 1 there are only two cases left to check. 

For example 
\[ p = (1,\,2,\,3,\,1,\,0,\,2),\quad q = (1,\,2,\,3,\,0,\,2,\,1) \]
show that $S \setminus \{ f_{2,1} \}$ is not separating,
while
\[ p = (1,\,2,\,3,\,5,\,0,\,8),\quad q = (1,\,2,\,3,\,0,\,8,\,5)\]
show that $S \setminus \{ f_{1,1} \}$ is not separating.

Case $n = 4$: Here 
\[ S = \{ f_{j,0} \mid j = 1,2,3,4 \} \cup \{ f_{0,k} \mid k = 1,2,3,4 \} \cup \{ f_{1,1},\, f_{2,1},\,f_{3,1},\, f_{1,2} \} .\]
Again by Lemma 1 there are only four cases left to check. Lemma 2 shows that $S \setminus \{ f_{1,2} \}$ is not separating.

For example 
\[ p = (1,\,2,\,3,\,4,\,0,\,3,\,1,\,4) ,\quad q = (1,\,2,\,3,\,4,\,1,\,0,\,4,\,3)  \]
show that $S \setminus \{ f_{3,1} \}$ is not separating.

For example 
\[ p = (3,\,1,\,0,\,-5,\,6,\,0,\,10,\,1) ,\quad q = (3,\,1,\,0,\,-5,\,1,\,10,\,6,\,0) \]
show that $S \setminus \{ f_{2,1} \}$ is not separating.

For example 
\[ p = (-1,\,0,\,1,\,2,\,1,\,7,\,-3,\,9) ,\quad q = (-1,\,0,\,1,\,2,\,-3,\,1,\,9,\,7) \]
show that $S \setminus \{ f_{1,1} \}$ is not separating.
\end{proof}

\bigskip


\bibliographystyle{myplain}
\bibliography{literature}

\end{document}